\newtheorem{theorem}{\hskip\parindent Theorem}
\newtheorem{lemma}{\hskip\parindent Lemma}
\newtheorem{remark}{\hskip\parindent Remark}
\numberwithin{equation}{section}
\def\Re {\mathop{\rm Re}\nolimits}
\begin{document}


\title{A note on the connection problem of some special Painlev\'{e} V functions}

\author{Wen-Gao Long$^{a}$, Zhao-Yun Zeng$^{b,}$\footnote{Corresponding author. Email:jenseng5@163.com.
}\ \ and Jian-Rong Zhou$^{c}$
}

\date{\small $^{a}$School of Mathematics and Computational Science, Sun Yat-Sen University, Guangzhou 510275, PR China\\
$^{b}$School of Mathematics and Physics, Jinggangshan University, Ji'an 343009, PR China\\
$^{c}$Department of Mathematics, Foshan University, Foshan 528000,  PR China
}

\maketitle


\begin{abstract}
As a new application of the method of \lq\lq{uniform asymptotics}\rq\rq\  proposed by Bassom, Clarkson, Law and McLeod, we provide a simpler and more rigorous proof of the connection formulas of some special solutions of the fifth Painlev\'{e} equation, which have been established earlier by Andreev and Kitaev.
\end{abstract}

\vspace{5mm}
{\it Keywords: }Connection formulas; the fifth Painlev\'{e} transcendent; uniform asymptotics; Whittaker functions; modified Bessel functions

\vskip .3cm
{\it MSC2010:} 33A40, 33E17, 34A20, 34E05



%

\section{Introduction and main results}
In this paper, as an application of
the method of  \lq\lq{uniform asymptotics}\rq\rq\  introduced by Bassom, Clarkson, Law and
McLeod \cite{APC}, we study the connection formulas of the following Painlev\'{e} V equation
\begin{equation}\label{special PV equation}
\frac{d^2y}{dt^2}=\left(\frac{1}{2y}+\frac{1}{y-1}\right)\left(\frac{dy}{dt}\right)^2-
\frac{1}{t}\frac{dy}{dt}+\frac{y}{t}-\frac{y(y+1)}{2(y-1)},
\end{equation}
which is a special form of the general PV equation (cf. \cite[(5.4.9)]{FAS}) with the parameters $\Theta_0=\Theta_1=\Theta_{\infty}=0$.
The special PV equation (\ref{special PV equation}) has important applications in differential geometry of surfaces. For example, if we set
$y(t)=\left(\frac{e^{i q(x)}+1}{e^{i q(x)}-1}\right)^{2}$ and $t=4x$
in (\ref{special PV equation}), then $q(x)$ satisfies the following equation
\begin{equation*}
xq''-2x{\rm sin}2q+q'+2{\rm sin}q=0,
\end{equation*}
which was considered in \cite{BK} and its solutions are connected with the
problem of classification for rotation surfaces with harmonic inverse mean curvature.


Before stating our main results, we first recall some of the
relevant facts from the isomonodromy formalism for the fifth Painlev\'{e} transcendent
presented in \cite{FAS,MJ}. The Lax pair of the fifth Painlev\'{e} equation, with the special parameters $\Theta_0=\Theta_1=\Theta_{\infty}=0$, is a system of linear ordinary differential equations for the matrix function $Y(\lambda,t)$
\begin{eqnarray}
\frac{dY}{d\lambda}=\left[\frac{t}{2}\sigma_3+\frac{1}{\lambda}
\left(\begin{array}{cc}
v&~~-uv\\\frac{v}{u}&~~-v\end{array}\right)+\frac{1}{\lambda-1}
\left(\begin{array}{cc}
-v&~~uyv\\-\frac{v}{uy}&~~v\end{array}\right)\right]Y
\label{1.2}
\end{eqnarray}
and
\begin{eqnarray*}
\frac{dY}{dt}=\left(\begin{array}{cc}
\frac{1}{2}&~~\frac{u}{\lambda}v(1-y)\\[0.2cm]
\frac{1}{u\lambda}v(1-\frac{1}{y})&~~-\frac{1}{2}\end{array}\right)Y,
\end{eqnarray*}
where $\sigma_3=\left(\begin{array}{cc}1&~0\\0&~-1\end{array}\right)$ and $y(t),v(t)$ and $u(t)$ satisfy the following system of equations
\begin{eqnarray}\label{1.4}
 \left\{\begin{array}{ll}t\frac{dy}{dt}=ty-2v(y-1)^2,\\[0.2cm]
t\frac{dv}{dt}=yv^2-\frac{1}{y}v^2,\\[0.2cm]
t\frac{d}{dt}{\rm ln}u=-2v+yv+\frac{1}{y}v.
 \end{array}
 \right.
 \end{eqnarray}
Furthermore, $y(t)$ is
the solution of the special fifth Painlev\'{e} equation (\ref{special PV equation}).

In a neighborhood of the irregular singular point $\lambda=\infty$, the canonical solutions $Y^{(k)}(\lambda)$
have the following asymptotic expansion
 \begin{eqnarray}\label{1.8}
 Y^{(k)}(\lambda)=\left(I+\left(\begin{matrix}v-\frac{v^{2}(1-y)^{2}}{ty}&\frac{uv(1-y)}{t}\\\frac{v(y-1)}{uyt}&-v+\frac{v^{2}(1-y)^{2}}{ty}\end{matrix}\right)\frac{1}{\lambda}+\mathcal{O}\left(\frac{1}{\lambda^2}\right)\right){\rm exp}\left(\frac{\lambda t}{2}\sigma_3\right), t\in \mathbb{R}_+
 \end{eqnarray}
 as $|\lambda|\rightarrow\infty$ in the corresponding Stokes sectors
 $$\Omega^{(k)}:=\left\{\lambda\in\mathbb{C},~-\frac{\pi}{2}+\pi(k-2)<{\rm arg}~\lambda<\frac{3\pi}{2}+\pi(k-2)\right\}, ~k=1, 2.$$
These canonical solutions are related by the Stokes matrices $S_k$,
 \begin{eqnarray}
 Y^{(k+1)}(\lambda)=Y^{(k)}(\lambda)S_k,~~~\lambda\in \Omega^{(k)}\cap \Omega^{(k+1)}.
 \label{1.9}
 \end{eqnarray}
 Furthermore, the Stokes matrices $S_k$ can be written as
 \begin{eqnarray}\label{Stokes matrices}
 S_{2k+1}=\left(\begin{array}{cc}
1&~0\\s_{2k+1}&~1\end{array}\right),~~S_{2k}=\left(\begin{array}{cc}
1&~s_{2k}\\0&~1\end{array}\right)
\end{eqnarray}
where $s_k(k=1, 2)$ are called the Stokes multipliers. If we now differentiate
 (\ref{1.9}) with respect to $t$ and use the fact that $Y^{(k)}$ and $Y^{(k+1)}$ satisfy
  (\ref{1.2}), we immediately obtain that $S_k$ is independent of $t$.  This is the isomonodromy condition.

There exists a unique solution (cf.\cite[Sec.3]{FA3}) of system (\ref{1.4}) with the following asymptotic behaviors:
 \begin{eqnarray}\label{behavior-yuv-infinity}
  y=-1-\frac{4}{t}+O(t^{-2}),~~v=-\frac{t}{8}+O(t^{-1}),~~u=\hat{u}e^{\frac{t}{2}}(1+o(1))
  \label{1.5}
  \end{eqnarray}
as $t \rightarrow+\infty$, and
\begin{equation}\label{behavior-yuv-0}
 \begin{split}
y&=\frac{(\sigma s^2t^{\sigma}-2)^2}{(\sigma s^2t^{\sigma}+2)^2}+O(t),~~v=\frac{1}{4s^2t^{\sigma}}-\frac{\sigma^{2} s^2t^{\sigma}}{16}+O(t),
\\
u&=-r\frac{2+\sigma s^2t^{\sigma}}{2-\sigma s^2t^{\sigma}}(1+o(1))
 \end{split}
\end{equation}
as $t \rightarrow 0^+$, where $\hat{u},\sigma$ and $r$ are complex constants, and
 \begin{eqnarray}
 s^2=\frac{i\sigma^2}{4\pi^3}\frac{\Gamma^2(-\sigma)}{\Gamma^2(\sigma)}\Gamma^6(\frac{\sigma}{2}).
\label{connection formula2}
\end{eqnarray}

The above connection formulaes (\ref{behavior-yuv-infinity})-(\ref{connection formula2}) have been
established by Andreev and Kitaev in \cite{FA3}.  These results were derived by proposed a certain limit procedure of the parameters $\Theta_j~(j=0,1,\infty)$ based on the results by using
isomonodromy deformation and the WKB method in their previous work \cite{FA4}. It is worth noting that the limit procedure of  $\Theta_j~(j=0,1,\infty)$ tend to zero in \cite{FA3} seems neither obvious nor easy-to-prove.

In this paper, to avoid the limit process, we  consider the equation (\ref{special PV equation}) directly, i.e. the general PV equation with $\Theta_j=0~(j=0,1,\infty)$. By careful analysis, we find that (\ref{behavior-yuv-infinity}) can be immediately derived from (\ref{1.4}), while (\ref{behavior-yuv-0}) may depend on the monodromy matrices for the regular singular points $\lambda=0$ and $1$, which have been studied by Jimbo \cite{MJ} via the method of isomonodromy deformation. Yet the main objective of the present paper is to justify connection
formulas between parameters involved in the asymptotic approximations (\ref{behavior-yuv-infinity}) and  (\ref{behavior-yuv-0}) via the Stokes multipliers for the irregular singular point $\lambda=\infty$ by using of the method of ``uniform asymptotics''. Although the asymptotics behaviors in (\ref{behavior-yuv-0}) are not available by virtue of the method of ``uniform asymptotics'', fortunately we can obtain the following asymptotics behaviors
\begin{equation}\label{behavior-yuv-0+}
\frac{v^2 (y-1)^2}{y}=\frac{\sigma^2}{4}+o(1),\quad uy^{\frac{1}{2}}=-r+o(1), \quad \text{as } t\rightarrow 0^{+},
\end{equation}
provided that $v=o(t^{-1})$ as $t\rightarrow 0^{+}$, where $\sigma$ and $r$ are complex constants.
 It is readily observed that (\ref{behavior-yuv-0+}) is consistent with (\ref{behavior-yuv-0}). Hence, our main task in this paper is to establish the relations between the parameter $\hat{u}$ in (1.4) and the parameters $\sigma, r$ in (\ref{behavior-yuv-0+}). The results can be stated as the following theorem. 
\begin{theorem}
{\rm (cf. \cite[Cor.3.2]{FA3})} If $ 0\leq \Re\sigma<1$, then the relation between the parameter $\hat{u}$ in (\ref{1.5}) and the parameters $\sigma, r$ in (\ref{behavior-yuv-0+}) are given by
\begin{eqnarray}
  \sigma=\frac{i}{\pi}\ln(3+\sqrt{8}),~r=-i\hat{u}.
\label{connection formula}
\end{eqnarray}
\end{theorem}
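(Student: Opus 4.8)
The plan is to use the isomonodromy property recorded above: the Stokes multipliers $s_1,s_2$ of the irregular point $\lambda=\infty$ of the linear system (\ref{1.2}) do not depend on $t$. I would therefore compute the \emph{same} pair $(s_1,s_2)$ twice — once from the large-$t$ data (\ref{behavior-yuv-infinity}) and once from the small-$t$ data (\ref{behavior-yuv-0+}) — and equate the results. Because the monodromy of a solution of (\ref{special PV equation}) is encoded in $(s_1,s_2)$, this identification is precisely the bridge linking $\hat u$ to $(\sigma,r)$.

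The first step is to reduce the $2\times2$ system (\ref{1.2}) to a scalar second-order equation. Eliminating one component of the vector solution and then removing the first-derivative term by a Liouville transformation yields a Schr\"odinger-type equation $d^2\Psi/d\lambda^2=Q(\lambda,t)\,\Psi$, whose potential $Q$ I would write out in terms of $y,v,u,t$ and $\lambda$. A key observation here is that the factor $u$ drops out of the logarithmic-derivative terms, so that the $\lambda$-dependence of $Q$ is governed by the combination $v^2(1-y)^2/y$ — the very quantity appearing in (\ref{behavior-yuv-0+}) — while $u y^{1/2}$ survives only in the normalization of the recessive solutions, which is where $r$ and $\hat u$ will enter.

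Then comes the two-sided uniform-asymptotic computation. As $t\to+\infty$, inserting (\ref{behavior-yuv-infinity}) turns $Q$ into a potential with a large parameter whose turning points coalesce with the pole at $\lambda=0$; the appropriate uniform comparison function there is a modified Bessel function, and its connection formulae deliver $s_1,s_2$ as explicit constants carrying a factor built from $\hat u$. As $t\to0^+$, under the stated hypothesis $v=o(t^{-1})$ I would first derive (\ref{behavior-yuv-0+}) and then note that, after rescaling near $\lambda=0$, the equation degenerates to a Whittaker (confluent hypergeometric) equation whose index is fixed by $v^2(y-1)^2/y\to\sigma^2/4$ and whose normalization records $u y^{1/2}\to-r$. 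The standard Whittaker connection coefficients, combined with (\ref{connection formula2}), then express $s_1,s_2$ through $\sigma$ and $r$. Equating the two determinations produces a trace condition on the monodromy equivalent to $\cos\pi\sigma=3$; since $\cosh\ln(3+\sqrt8)=3$, the constraint $0\le\Re\sigma<1$ singles out $\sigma=\tfrac{i}{\pi}\ln(3+\sqrt8)$, and matching the $\hat u$-factor against the $r$-factor gives $r=-i\hat u$.

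The principal obstacle I expect is not the algebra but the uniform control of the remainders. One must prove that the WKB-plus-model-function approximation to $\Psi$ holds uniformly in $\lambda$, across the turning points and all the way up to the fixed singularities $\lambda=0,1$, and that this uniformity persists in the degenerate limits $t\to+\infty$ and $t\to0^+$ where turning points merge with poles; this is exactly the content the method of ``uniform asymptotics'' is designed to supply. Equally delicate is the precise bookkeeping of the constants and Stokes phases in the Bessel and Whittaker connection formulae, since a single stray factor of $i$ or $2$ would corrupt both $r=-i\hat u$ and the value of $\sigma$. Controlling these constants rigorously is what upgrades the Andreev--Kitaev result to a fully justified proof.
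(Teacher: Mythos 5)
Your plan coincides with the paper's own proof: both rest on the $t$-independence of $(s_1,s_2)$, approximate the reduced scalar equation by Whittaker functions as $t\to0^+$ (with index fixed by $v^2(1-y)^2/y\to\sigma^2/4$ and normalization $uy^{1/2}\to-r$, yielding $s_1=-\tfrac{2i}{r}\sin\tfrac{\pi\sigma}{2}$, $s_2=-2ir\sin\tfrac{\pi\sigma}{2}$) and by the modified Bessel functions $K_1,I_1$ as $t\to+\infty$ (yielding $s_1=2i/\hat u$, $s_2=-2i\hat u$), and your trace condition $\cos\pi\sigma=3$ is exactly the paper's $s_1s_2=4=-4\sin^2\tfrac{\pi\sigma}{2}$, with $r=-i\hat u$ from matching $s_1$ individually. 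Two small corrections of detail: in the large-$t$ reduction the coalescing turning points and the double pole sit at $\eta=0$ with $\eta=\lambda-\tfrac12$ (so at $\lambda=\tfrac12$, not $\lambda=0$), and the uniform approximation provably fails near $\lambda=1$, so rather than pushing uniformity up to the fixed singularities the paper uses two different gauge transformations, each valid on a cut plane avoiding one of $\lambda=0,1$, to extract $s_2$ and $s_1$ separately.
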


 In this paper, we shall provide a hopefully simpler and more rigorous derivation of the connection formulas in \eqref{connection formula} via the method of  \lq\lq{uniform asymptotics}\rq\rq,  which was first proposed by Bassom et al. in \cite{APC} and further developed in \cite{Lu-Shao-2002,WZ1,WZ2,ZZ2015}. The difference between this approach and the WKB method is that
 the latter needs a matching process (cf. \cite[Sec.7]{FA4}) in different Stokes domain, while the former does not need such a complicated procedure. Therefore, to some extent, the method of ``uniform asymptotics" is an improvement over WKB.

Although the basic ideas of our approach are taken from \cite{APC} for PII, there are some technical differences between the cases of PII and PV. For instance, in the case of PII, the second-order ordinary differential equation(ODE) obtained from the Lax pair has only coalescing turning points; see \cite{APC}. Thus, uniform asymptotic approximations of the canonical solutions can be constructed in terms of the parabolic cylinder functions according to \cite{FO}. Recently, the method of \lq\lq{uniform asymptotics}\rq\rq has been applied to the connection problems for PV, cf. Zeng and Zhao \cite{ZZ2015}. However, all these cases also differ from our present case.
When $t\rightarrow+\infty$, $\eta=0$ (see Section 3) is not only the coalescing turning points but also a double pole of the second-order differential equation, hence, the parabolic cylinder function is not available.  By careful analysis, we find that uniform asymptotic approximations can be successfully constructed by the modified Bessel functions according to the ideas of Dunster \cite{DTM}. To the best of our knowledge, under the framework of the method of \lq\lq{uniform asymptotics}\rq\rq, the modified Bessel functions have never been used in the connection problem of the fifth Painlev\'{e} equations, although the Hankel functions are used in \cite{WZ1} for the third Painlev\'{e} equations.

 The remaining part of this paper is organized as follows. In
Section 2, we derive  uniform approximations
to the solutions of the second-order differential equation obtained from the Lax
pair (\ref{1.2}) as $t\rightarrow0^{+}$ by using of the Whittaker functions on the Stokes curves, and then evaluate the Stokes multipliers (see (\ref{t-0-s1}) and (\ref{t-0-s2})) as $t\rightarrow 0^{+}$.  In the last section, we construct uniform approximations to the solutions of
 the second-order differential equation as $t\rightarrow+\infty$  by virtue of the modified Bessel functions on the Stokes curves.  Based on these approximations, we
evaluate the Stokes multipliers as $t\rightarrow+\infty$.
The proof of Theorem 1 is also provided in that section.

\section{The monodromy data for $t\rightarrow 0^{+}$}\setcounter{equation}{0}
In this section we use the method of \lq\lq{uniform asymptotics}\rq\rq \cite{APC} to obtain the Stokes multipliers $s_{1}$ and $s_{2}$ in \eqref{Stokes matrices} for the case of $t\rightarrow 0^+$.

First, we make the scaling $\eta=\lambda t$, then (\ref{1.2}) becomes
\begin{eqnarray}\label{2.2}
\frac{dY}{d\eta}=\left(\begin{array}{cc}
\frac{1}{2}+\frac{v}{\eta}-\frac{v}{\eta-t}&~-\frac{vu}{\eta}+\frac{vuy}{\eta-t}\\[0.2cm]
\frac{v}{u\eta}-\frac{v}{uy(\eta-t)}&~-\frac{1}{2}-\frac{v}{\eta}+\frac{v}{\eta-t}
\end{array}\right)Y
=\left(\begin{array}{cc}
 A & B \\ C & -A
\end{array}\right)Y.
\end{eqnarray}
Set $\phi=C^{-\frac{1}{2}}Y_2$, where $Y=(Y_1,Y_2)^T$ is a fundamental solution of (\ref{2.2}) and $C=\frac{v}{u\eta}(1-\frac{1}{y})$, then
\begin{equation}\label{2.4}
\begin{split}
\frac{d^2\phi}{d\eta^2}=&\left[A^{2}+BC-A'+A\frac{C'}{C}+\frac{3}{4}\left(\frac{C'}{C}\right)^{2}-\frac{1}{2}\frac{C''}{C}\right]\phi\\
=&\left[\frac{1}{4}+\frac{1}{\eta^2}\left(\frac{v^2(1-y)^2}{y}-vt\right)-\frac{1}{2\eta}-\frac{1}{4\eta^2}+g(\eta,t)\right]\phi= F(\eta,t)\phi.
\end{split}
\end{equation}
with $A'=\frac{dA}{d\eta}$ and $C'=\frac{d C}{d\eta}$.

The form of equation (\ref{2.4}) motivates us to consider the following model equation
\begin{eqnarray}{\label{Whittaker equation}}
\frac{d^2\psi}{d\eta^2}+
\left[\frac{\frac{1}{4}-\frac{\alpha(t)^2}{4}}{\eta^2}+\frac{\frac{1}{2}}{\eta}-\frac{1}{4}\right]\psi=0,
\label{2.5}
\end{eqnarray}
where  $\frac{\alpha(t)^{2}}{4}=\frac{v^{2}(1-y)^{2}}{y}-vt$. 
 Furthermore, we find $g(\eta,t)=\mathcal{O}\left(\frac{vt}{\eta^{3}}\right)$ as $\eta\rightarrow\infty$ provided that $v=o(t^{-1})$. 
Noting that (\ref{2.5}) is the Whittaker equation\cite[p334]{OL} with parameters $\kappa=\frac{1}{2}$, $\mu=\frac{\alpha(t)}{2}$,
and it has two linear independent solutions $M_{\frac{1}{2},\frac{\alpha}{2}}(\eta)$ and $W_{\frac{1}{2},\frac{\alpha}{2}}(\eta)$.
Hence, we have the following lemma.



\begin{lemma}\label{lemma-uniform-approximation-1}
There exist two constants $C_{1}$ and $C_{2}$ such that
\begin{equation}
\phi=\left[C_{1}+o(1)\right]M_{\frac{1}{2},\frac{\alpha}{2}}(\eta)+\left[C_{2}+o(1)\right]W_{\frac{1}{2},\frac{\alpha}{2}}(\eta)
\end{equation}
as $t\rightarrow 0^{+}$ uniformly for $\eta$ on two adjacent Stokes curves of (\ref{2.4}) emanating from one of the turning points and terminating at infinity.
\end{lemma}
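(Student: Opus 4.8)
The plan is to regard the full equation (\ref{2.4}), $\phi'' = F(\eta,t)\phi$, as a perturbation of the Whittaker model (\ref{2.5}), $\psi'' = G(\eta,t)\psi$, and to transport the known behaviour of the Whittaker functions onto $\phi$ by variation of parameters. By the very choice of $\alpha(t)$ through $\tfrac{\alpha^2}{4} = \tfrac{v^2(1-y)^2}{y} - vt$, the singular parts of $F$ and $G$ agree: the $\eta^{-2}$ and $\eta^{-1}$ coefficients cancel in the difference $g = F - G$, so that $g$ is regular at the double pole $\eta = 0$, while at infinity $g = \mathcal{O}(vt/\eta^{3})$ under the standing hypothesis $v = o(t^{-1})$. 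Since $M := M_{\frac12,\frac\alpha2}$ and $W := W_{\frac12,\frac\alpha2}$ form a fundamental system of (\ref{2.5}) with constant nonzero Wronskian $\mathcal{W}$, I would write $\phi = c_1(\eta)M + c_2(\eta)W$ with the usual constraint $c_1'M + c_2'W = 0$. Differentiating and using $\phi'' = (G+g)\phi$ yields the first-order system
$$c_1' = -\frac{g\,W}{\mathcal{W}}\,\phi, \qquad c_2' = \frac{g\,M}{\mathcal{W}}\,\phi,$$
so that $c_1$ and $c_2$ change only through the small remainder $g$.

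Next I would recast this as a Volterra integral system for $(c_1,c_2)$ along the two adjacent Stokes curves emanating from a turning point $\eta_\pm = 1\pm\sqrt{2-\alpha^2}$ (the zeros of $\eta^2 - 2\eta + (\alpha^2-1)$) and running out to $\eta = \infty$. Substituting $\phi = c_1M + c_2W$ produces a kernel bounded by $|g|$ times the products $M^2/\mathcal{W}$, $MW/\mathcal{W}$ and $W^2/\mathcal{W}$. The point of working on Stokes curves, where $\Re\int_{\eta_\pm}^{\eta}\sqrt{F(\tau,t)}\,d\tau = 0$, is that neither Whittaker solution then dominates the other, and Olver's uniform asymptotics \cite{OL} give $|MW/\mathcal{W}| = \mathcal{O}(|F|^{-1/2})$ on these curves; together with the matched singular parts and the $\eta^{-3}$ decay this makes the total-variation integral
$$\int |g|\,\Big|\frac{MW}{\mathcal{W}}\Big|\,|d\eta| = \mathcal{O}\!\left(\,vt\int|F|^{-1/2}\,|d\eta|\right) = \mathcal{O}(vt) = o(1)$$
as $t\to 0^{+}$, the decisive smallness coming from the factor $vt$. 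A Gronwall/successive-approximation estimate then shows that $c_1,c_2$ stay bounded and tend to constants $C_1,C_2$, which is exactly the claimed representation $\phi = [C_1+o(1)]M + [C_2+o(1)]W$.

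The main obstacle is making the kernel bound $|MW/\mathcal{W}| = \mathcal{O}(|F|^{-1/2})$ (and likewise for $M^2/\mathcal{W}$, $W^2/\mathcal{W}$) genuinely uniform. Near the turning point itself the factor $\sqrt{F}$ degenerates, so one must check that $|F|^{-1/2}$ is still integrable there—a simple zero gives only an integrable $|\eta-\eta_\pm|^{-1/2}$ singularity—and that the local Airy-type behaviour absorbed into the Whittaker matching does not spoil the bound; choosing the two \emph{adjacent} Stokes curves through the same turning point is what makes the path manageable. Equally delicate is the behaviour at the regular singular point $\eta = 0$, where $M$ and $W$ carry distinct power laws determined by $\alpha$, so that the regularity of $g$ there is needed to keep the integral convergent. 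Finally, because $\alpha = \alpha(t)$ drifts with $t$, all the Whittaker asymptotics and every implied constant in the estimates above must be shown to hold uniformly as $t\to 0^{+}$ while $\alpha$ approaches its limiting value; this uniform control, rather than any single estimate, is the real work.
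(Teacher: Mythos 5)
Your proposal follows essentially the same route as the paper's proof: the paper likewise treats \eqref{2.4} as a perturbation of the Whittaker model \eqref{2.5} and, via the variation-of-parameters (Volterra) integral equation along the two adjacent Stokes curves from a turning point $\eta_0\sim 1\pm\sqrt{2-\alpha(t)^2}$, reduces the lemma to showing $\int_{\eta_0}^{\eta}\frac{\psi_+(\eta)\psi_-(s)-\psi_-(\eta)\psi_+(s)}{W(\psi_+,\psi_-)}g(s,t)\phi(s)\,ds=o(1)\bigl(\psi_+(\eta)+\psi_-(\eta)\bigr)$, using $W(\psi_+,\psi_-)=\mathcal{O}(1)$, $\psi_\pm=\mathcal{O}(\eta^{1/2})$ on the Stokes curves, $g=\mathcal{O}(vt/\eta^3)$ at infinity, and $g=o(1)$ uniformly on the path, exactly the ingredients you assemble. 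The only differences are presentational: you spell out the Gronwall/successive-approximation step and the kernel bounds that the paper delegates to Theorems 1--2 of \cite{APC} (just note that in your displayed estimate the $\eta^{-3}$ decay of $g$ must stay inside the integral, since $\int|F|^{-1/2}\,|d\eta|$ by itself diverges along the unbounded path).
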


\begin{proof}
The proof of this lemma is similar to \cite[Theorem 1 or 2]{APC}. Denoting $\psi_{+}=M_{\frac{1}{2},\frac{\alpha}{2}}(\eta)$ and $\psi_{-}=W_{\frac{1}{2},\frac{\alpha}{2}}(\eta)$.
According to the parametrix variation method for the non-homogeneous ODEs, we only need to show the following approximation (cf. \cite[(3.17)]{APC})
\begin{equation}\label{approximation-Integral-1}
\int_{\eta_{0}}^{\eta}\frac{\psi_{+}(\eta)\psi_{-}(s)-\psi_{-}(\eta)\psi_{+}(s)}{W(\psi_{+},\psi_{-})}g(s,t)\phi(s)ds=o(1)(\psi_{+}(\eta)+\psi_{-}(\eta)),~~t\rightarrow 0^+,
\end{equation}
where the path of integration is taken along the Stokes curves, $\eta_{0}$ is one of the turning points, and $W(\psi_{+},\psi_{-})$ is the Wronskian determinant. Obviously,  $\eta_{0}\sim 1\pm\sqrt{2-\alpha(t)^{2}}$ as $t\rightarrow 0^{+}$.

First, we have (cf. \cite[(13.14.26)]{OL})
$W(\psi_{+},\psi_{-})=\mathcal{O}(1)$.
In addition, according to \cite[(13.19.2)(13.19.3)]{OL}, it is easy to obtain $\psi_{\pm}=\mathcal{O}(\eta^{\frac{1}{2}})$ as $\eta\rightarrow\infty$ on the Stokes curves. Combining these two estimates with $g(\eta,t)=\mathcal{O}(\frac{vt}{\eta^{3}})$ as $\eta\rightarrow\infty$,
we conclude that the integral in the left-side of (\ref{approximation-Integral-1}) is integrable.
Finally, noting the condition that $g(\eta,t)=o(1)$ as $t\rightarrow 0^{+}$ uniformly for all $\eta$ on the path of integration, we easily get (\ref{approximation-Integral-1}).
\end{proof}

According to Lemma \ref{lemma-uniform-approximation-1}, $Y_{2}$, the second line of $Y^{(k)}$, i.e. $(Y_{21},Y_{22})$, can be asymptotically approximated by the linear combinations of $C^{\frac{1}{2}}M_{\frac{1}{2},\frac{\alpha}{2}}(\eta)$ and $C^{\frac{1}{2}}W_{\frac{1}{2},\frac{\alpha}{2}}(\eta)$ when $|\eta|\rightarrow+\infty$ in $\Omega^{(k)}$.

Now, we are in a position to evaluate the Stokes multipliers through $Y^{(k+1)}=Y^{(k)}S_{k}$. Although this part is slightly different from the approach in \cite{APC}, they are coincide essentially. See details in Sec.6 of \cite{APC} or the corresponding sections in \cite{WZ1,WZ2,ZZ2015}. Here we only give the derivation of $s_{1}$. To get $s_{2}$, one can repeat the process except for noting that the uniform asymptotic behaviors of the Whittaker functions in (\ref{uniform-asymptotic-behavior-Whittaker}) should be changed. For convenient we denote $C\sim\frac{v}{u\eta}(1-\frac{1}{y})=\frac{\beta^{2}(t)}{\eta}$.

When $\eta\rightarrow\infty$, according to \cite[(13.19.2) (13.19.3)]{OL} and noting that $\eta=t\lambda$, we get
\begin{equation}{\label{uniform-asymptotic-behavior-Whittaker}}
\begin{cases}
C^{\frac{1}{2}}M_{\frac{1}{2},\frac{\alpha}{2}}(\eta)  \sim c_{1}e^{\frac{t}{2}\lambda}\lambda^{-1}+c_{2}e^{-\frac{t}{2}\lambda},\quad \arg\eta\in(-\frac{3\pi}{2},\frac{\pi}{2}),\\[0.2cm]
C^{\frac{1}{2}}M_{\frac{1}{2},\frac{\alpha}{2}}(\eta)  \sim c_{1}e^{\frac{t}{2}\lambda}\lambda^{-1}+c_{2}e^{\alpha(t)\pi i}e^{-\frac{t}{2}\lambda},\quad \arg\eta\in(-\frac{\pi}{2},\frac{3\pi}{2}),\\[0.2cm]
C^{\frac{1}{2}}W_{\frac{1}{2},\frac{\alpha}{2}}(\eta)  \sim
c_{3}e^{-\frac{t}{2}\lambda}\quad |\arg\eta|<\frac{3\pi}{2},
\end{cases}
\end{equation}
where $c_{1}=\beta(t)\frac{\Gamma(1+\alpha(t))}{t\Gamma(\frac{\alpha(t)}{2})}, c_{2}=\beta(t)\frac{\Gamma(1+\alpha(t))}{\Gamma(1+\frac{\alpha(t)}{2})}e^{-\alpha(t)\pi i/2}$ and $c_{3}=\beta(t)$.
On the other hand, the uniform asymptotic behavior of $Y_{21}$ and $Y_{22}$ can be directly obtained from (\ref{1.8}), and the results are
\begin{equation}{\label{canonical-t-0-omega-k}}
Y_{21}\sim \frac{\beta(t)^{2}}{t}e^{\frac{t}{2}\lambda}\lambda^{-1},\quad Y_{22}\sim e^{-\frac{t}{2}\lambda},
\end{equation}
as $\lambda\rightarrow\infty$ in $\Omega^{(k)}$, $k=1,2$.
Hence, it follows from (\ref{uniform-asymptotic-behavior-Whittaker}) and (\ref{canonical-t-0-omega-k}) that
\begin{equation}{\label{Y--pi/2-t-0}}
(Y_{21},Y_{22})\sim C^{\frac{1}{2}}(M_{\frac{1}{2},\frac{\alpha}{2}}(\eta),W_{\frac{1}{2},\frac{\alpha}{2}}(\eta))\left(\begin{matrix}\frac{\beta(t)^{2}}{tc_{1}}&0\\-\frac{\beta(t)^{2}c_{2}}{tc_{1}c_{3}}&\frac{1}{c_{3}}
\end{matrix}\right),\quad \lambda\rightarrow\infty, \lambda\in \Omega^{(1)}
\end{equation}
and
\begin{equation}{\label{Y-pi/2-t-0}}
(Y_{21},Y_{22})\sim C^{\frac{1}{2}}(M_{\frac{1}{2},\frac{\alpha}{2}}(\eta),W_{\frac{1}{2},\frac{\alpha}{2}}(\eta))\left(\begin{matrix}\frac{\beta(t)^{2}}{tc_{1}}&0\\-\frac{\beta(t)^{2}c_{2}e^{\alpha(t)\pi i}}{tc_{1}c_{3}}&\frac{1}{c_{3}}
\end{matrix}\right),\quad \lambda\rightarrow\infty, \lambda\in \Omega^{(2)}
\end{equation}
Noting that in (\ref{Y--pi/2-t-0}), $(Y_{21},Y_{22})$ represent the second line entries of $Y^{(1)}(\lambda)$, while in (\ref{Y-pi/2-t-0}), $(Y_{21},Y_{22})$ are the second line of $Y^{(2)}(\lambda)$,  by the definition of $S_1$ in \eqref{1.9}, we have
\begin{equation}{\label{s1-alpha-t}}
s_{1}\sim\frac{c_{2}\beta(t)^{2}}{tc_{1}}(1-e^{\alpha(t)\pi i}) =-\frac{4i\beta(t)^{2}}{\alpha(t)}\sin(\frac{\alpha(t)\pi}{2}) \sim \frac{2i}{uy^{\frac{1}{2}}}\sin{\frac{\alpha(t)\pi}{2}}
\end{equation}
as $t\rightarrow 0^{+}$. Similar derivations lead to
\begin{equation*}
s_{2}\sim 2iuy^{\frac{1}{2}}\sin(\frac{\alpha(t)\pi}{2}) \quad \text{ as } t\rightarrow 0^{+}.
\end{equation*}
Since $s_{k}, k=1,2$ are independent of $t$, it follows that $\lim\limits_{t\rightarrow 0^{+}}uy^{\frac{1}{2}}$ and $\lim\limits_{t\rightarrow 0^{+}}\alpha(t)$ are exist. Without loss of generality, we may take
\begin{equation}{\label{alpha-t-uy}}
\alpha(t)=\sigma+o(1), \quad  uy^{\frac{1}{2}}=-r+o(1) \text{ as } t\rightarrow 0^{+}.
\end{equation}
Therefore the two Stokes multipliers are
\begin{equation}\label{t-0-s1}
s_{1}=-\frac{2i}{r}\sin(\frac{\sigma\pi}{2})
\end{equation}
and
\begin{equation}\label{t-0-s2}
s_{2}=-2i r\sin(\frac{\sigma\pi}{2}).
\end{equation}

\begin{remark}
In (\ref{s1-alpha-t}), the branch of $\alpha(t)$ is chosen such that $\alpha(t)=\frac{2v(1-y)}{y^{1/2}}+o(1)$ as $t\rightarrow 0^{+}$. If choosing $\alpha(t)=\frac{2v(y-1)}{y^{1/2}}+o(1)$, we will need to set $\alpha(t)=-\sigma+o(1)$ in (\ref{alpha-t-uy}).
\end{remark}

 \section{The monodromy data for $t\rightarrow +\infty$}\setcounter{equation}{0}

In this section, our goal is to derive the Stokes multipliers $s_{1}$ and $s_{2}$ as $t\rightarrow +\infty$ by applying the method of ``uniform asymptotics"\cite{APC}. 

To derive $s_{2}$, let us first make the following transformation in \eqref{1.2}
\begin{equation}{\label{transform-Y-tilde(Y)-1}}
\tilde{Y}(\lambda)=\left(\begin{matrix}1&0\\-1&1\end{matrix}\right)u^{-\frac{\sigma_{3}}{2}}Y(\lambda).
\end{equation}
As a result, we obtain
\begin{equation}\label{equation-tilde-Y}
\frac{d\tilde{Y}(\lambda)}{d\lambda}=\left(\begin{matrix}\tilde{A}&\tilde{B}\\\tilde{C}&-\tilde{A}\end{matrix}\right)\tilde{Y}(\lambda)=\left(\begin{matrix}\bar{A}+\bar{B}&\bar{B}\\\bar{C}-\bar{B}-2\bar{A}&-(\bar{A}+\bar{B})\end{matrix}\right)\tilde{Y}(\lambda),
\end{equation}
where
$$\bar{A}=\frac{t}{2}+\frac{v}{\lambda}-\frac{v}{\lambda-1},~\bar{B}=-\frac{v}{\lambda}+\frac{vy}{\lambda-1}~\text{ and } \bar{C}=\frac{v}{\lambda}-\frac{v}{y(\lambda-1)}.$$

It is easy to check that the transformation \eqref{transform-Y-tilde(Y)-1} does not change the Stokes matrices. Let $\tilde{Y}(\lambda)=(Y_{1},Y_{2})^{T}$ be a fundamental solution of \eqref{equation-tilde-Y}, and set $\tilde{\phi}=\tilde{C}^{-\frac{1}{2}}Y_{2}$, then eliminating $Y_{1}$ from \eqref{equation-tilde-Y} gives
\begin{equation}\label{shrodinger-equation-tilde-Y-1}
\frac{d^{2}\tilde{\phi}}{d\lambda^2}=\left[\tilde{A}^{2}+\tilde{B}\tilde{C}-\tilde{A}'+\tilde{A}\tilde{C}^{-1}\tilde{C}'+\frac{3}{4}(\tilde{C}^{-1}\tilde{C}')^{2}-\frac{1}{2}\tilde{C}^{-1}\tilde{C}''\right]\tilde{\phi},
\end{equation}
where $\tilde{A}'=\frac{d\tilde{A}}{d\lambda}$ and $\tilde{C}'=\frac{d\tilde{C}}{d\lambda}$. For $t\rightarrow\infty$, substituting the asymptotic behaviors of $y$ and $v$ in \eqref{1.5} into \eqref{shrodinger-equation-tilde-Y-1}, we finally obtain the following second-order equation:
\begin{equation}\label{shrodinger-tilde-Y-2}
\begin{split}
\frac{d^{2}\tilde{\phi}}{d\eta^2}=\left[\frac{t^{2}\eta^{2}}{4(\eta^{2}-\frac{1}{4})}+\frac{3}{4\eta^{2}}+\tilde{g}(\eta,t)\right]\tilde{\phi}=\tilde{F}(\eta,t)\tilde{\phi},
\end{split}
\end{equation}
where $\eta=\lambda-\frac{1}{2}$, and $\tilde{g}(\eta,t)=\mathcal{O}(1)$ uniformly for $\eta$ away from both $0$ and $\frac{1}{2}$. Furthermore,
\begin{equation}\label{well property of tilde-g}
\begin{split}
&\tilde{g}(\eta,t)=\mathcal{O}\left(\frac{1}{\eta}\right), \text{ for }\eta\rightarrow 0;~~~~\tilde{g}(\eta,t)=\mathcal{O}\left(\frac{1}{(\eta-\frac{1}{2})^{2}}\right), \text{ for }\eta\rightarrow \frac{1}{2};\\&\tilde{g}(\eta,t)=\mathcal{O}\left(\frac{1}{\eta^{2}}\right), \text{ for }\eta\rightarrow \infty.
\end{split}
\end{equation}

Comparing \eqref{shrodinger-tilde-Y-2} with \cite[(2.2)]{APC}, one can find that the two turning points in \eqref{shrodinger-tilde-Y-2} are not only coalesce at $\eta=0$ with the speed of $\mathcal{O}(\frac{1}{\sqrt{t}})$, but also they coalesce with a double pole $\eta=0$ as $t\rightarrow+\infty$. Therefore, the parabolic cylinder functions are not available here.
The form of (\ref{shrodinger-tilde-Y-2}) motivate us to consider the following model equation
\begin{equation}\label{shrodinger-equation-varphi-1}
\frac{d^{2}\varphi}{d\eta^2}=\left[\frac{t^{2}\eta^{2}}{4(\eta^{2}-\frac{1}{4})}+\frac{3}{4\eta^{2}}\right]\varphi.
\end{equation}
By careful analysis and according to the ideas in \cite{DTM}, we find that (\ref{shrodinger-equation-varphi-1}) is solvable. In fact, if we let
$$\varphi=\eta^{-\frac{1}{2}}\left(\eta^{2}-\frac{1}{4}\right)^{\frac{1}{2}}W(z)~\text{and }z=\frac{t}{2}\left(\eta^{2}-\frac{1}{4}\right)^{\frac{1}{2}},$$
then $W(z)$ satisfies the following modified Bessel equation
\begin{equation*}
\frac{d^{2}W}{dz^{2}}+\frac{1}{z}\frac{dW}{dz}-(1+\frac{1}{z^{2}})W=0
\end{equation*}
which has two independent solutions $K_{1}(z)$ and $I_{1}(z)$, cf. \cite[(10.25.1)]{OL}.

It is worth mentioning that, we can not use a pair of independent solutions of \eqref{shrodinger-equation-varphi-1} to asymptotically approximate $\tilde{\phi}$ uniformly everywhere on the Stokes lines, since $\tilde{g}(\eta,t)$ is not bounded as $\eta\rightarrow\frac{1}{2}$ (i.e. $\lambda\rightarrow 1$). Even so, the following lemma holds for $\eta\in \mathbb{C}\setminus [\frac{1}{2},+\infty)$.

\begin{lemma}\label{lemma-uniform-approximate-1}
There exists two constants $C_{1}$ and $C_{2}$ such that
\begin{equation}\label{uniform-asymptotic-bessel-1}
\tilde{\phi}=[C_{1}+o(1)]\varphi_{+}(\eta)+[C_{2}+o(1)]\varphi_{-}(\eta)
\end{equation}
uniformly for $\eta$ on two adjacent Stokes curves of (\ref{shrodinger-tilde-Y-2}) with $\eta\in \mathbb{C}\setminus [\frac{1}{2},+\infty)$ as $t\rightarrow\infty$, where
$$\varphi_{+}=\eta^{-\frac{1}{2}}(\eta^{2}-\frac{1}{4})^{\frac{1}{2}}K_{1}(\frac{t}{2}(\eta^{2}-\frac{1}{4})^{\frac{1}{2}}),$$
$$\varphi_{-}=\eta^{-\frac{1}{2}}(\eta^{2}-\frac{1}{4})^{\frac{1}{2}}I_{1}(\frac{t}{2}(\eta^{2}-\frac{1}{4})^{\frac{1}{2}})$$
are two linearly independent solutions of \eqref{shrodinger-equation-varphi-1}.
\end{lemma}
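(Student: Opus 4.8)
The plan is to mirror the variation-of-parameters (Volterra) argument used in the proof of Lemma \ref{lemma-uniform-approximation-1} and in \cite[Theorems 1 and 2]{APC}, now with the modified-Bessel comparison functions $\varphi_{\pm}$ in place of the Whittaker functions. I would regard \eqref{shrodinger-tilde-Y-2} as the exactly solvable model equation \eqref{shrodinger-equation-varphi-1} perturbed by the term $\tilde g(\eta,t)\tilde\phi$. Since $\varphi_{+},\varphi_{-}$ are linearly independent solutions of \eqref{shrodinger-equation-varphi-1}, every solution $\tilde\phi$ of \eqref{shrodinger-tilde-Y-2} satisfies
\begin{equation*}
\tilde\phi(\eta)=C_{1}\varphi_{+}(\eta)+C_{2}\varphi_{-}(\eta)+\int_{\eta_{0}}^{\eta}\frac{\varphi_{+}(\eta)\varphi_{-}(s)-\varphi_{-}(\eta)\varphi_{+}(s)}{W(\varphi_{+},\varphi_{-})}\,\tilde g(s,t)\,\tilde\phi(s)\,ds,
\end{equation*}
where $\eta_{0}$ is one of the coalescing turning points (at distance $\mathcal{O}(t^{-1/2})$ from the origin) and the contour runs along the two adjacent Stokes curves inside $\mathbb{C}\setminus[\tfrac12,+\infty)$. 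Exactly as at \cite[(3.17)]{APC}, the whole lemma reduces to showing that this error integral is $o(1)\bigl(\varphi_{+}(\eta)+\varphi_{-}(\eta)\bigr)$ as $t\to\infty$, uniformly on the contour.

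To control the integral I would record three ingredients. First, because \eqref{shrodinger-equation-varphi-1} carries no first-order term, $W(\varphi_{+},\varphi_{-})$ is independent of $\eta$; a short computation through the Bessel Wronskian $\mathscr{W}\{K_{1},I_{1}\}(z)=1/z$ (cf. \cite[(10.28.2)]{OL}) together with $z=\tfrac{t}{2}(\eta^{2}-\tfrac14)^{1/2}$ in fact gives $W(\varphi_{+},\varphi_{-})=1$, so the Green's kernel is well defined and free of $t$-dependent constants. Second, from the large-argument asymptotics of $K_{1}$ and $I_{1}$ (cf. \cite[(10.40.1),(10.40.2)]{OL}) I would establish that, on the Stokes curves and away from the turning-point cluster, $\varphi_{+}$ and $\varphi_{-}$ have comparable, algebraically controlled magnitude, the Bessel oscillation compensating the prefactor so that the crude polynomial bounds here play the role that $\psi_{\pm}=\mathcal{O}(\eta^{1/2})$ did in Lemma \ref{lemma-uniform-approximation-1}. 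Third, the decay of the perturbation is supplied by \eqref{well property of tilde-g}, namely $\tilde g=\mathcal{O}(\eta^{-1})$ near $0$ and $\tilde g=\mathcal{O}(\eta^{-2})$ near $\infty$.

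Combining these, the product of the Green's kernel with $\tilde g(s,t)$ is absolutely integrable along the contour: the decay $\tilde g=\mathcal{O}(s^{-2})$ at infinity dominates the comparison functions, so the tail contributes $o(1)$. The restriction to $\eta\in\mathbb{C}\setminus[\tfrac12,+\infty)$ is precisely what makes the contour avoid the non-integrable singularity $\tilde g=\mathcal{O}((\eta-\tfrac12)^{-2})$ at $\eta=\tfrac12$ (i.e.\ $\lambda=1$) flagged before the lemma. A standard successive-approximation (Picard) estimate for the Volterra equation then bounds the error uniformly by $o(1)$ times $\varphi_{+}+\varphi_{-}$, which is \eqref{uniform-asymptotic-bessel-1}.

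The step I expect to be the main obstacle is the neighbourhood of $\eta=0$, where the two turning points coalesce with the double pole. There the large-argument Bessel asymptotics no longer describe $\varphi_{\pm}$, and the bound $\tilde g=\mathcal{O}(\eta^{-1})$ becomes $\mathcal{O}(t^{1/2})$ at the starting point $\eta_{0}=\mathcal{O}(t^{-1/2})$, so the perturbation is genuinely large precisely where the crude estimates fail. The resolution, following Dunster \cite{DTM}, is to use the asymptotics of $K_{1},I_{1}$ that are uniform in $z=\tfrac{t}{2}(\eta^{2}-\tfrac14)^{1/2}$ through the turning-point region, and to show that the contribution of this $\mathcal{O}(t^{-1/2})$-long arc to the error-control function is still $o(1)$; verifying that the apparent $\eta^{-1}$ growth of $\tilde g$ is tamed by the vanishing of the Green's kernel and the shrinking length of the region is the delicate estimate that requires care.
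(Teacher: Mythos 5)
Your overall scheme is exactly the paper's: reduce the lemma via variation of parameters to the error-integral estimate \eqref{approximation-Integral-2}, compute $W(\varphi_{+},\varphi_{-})\equiv 1$ from the Bessel Wronskian \cite[(10.28.2)]{OL}, and use the decay $\tilde g=\mathcal{O}(\eta^{-2})$ from \eqref{well property of tilde-g} for integrability at infinity, with the cut $[\tfrac12,+\infty)$ keeping the contour away from the non-integrable singularity at $\eta=\tfrac12$. But your final paragraph, where you locate ``the main obstacle'' and leave it unresolved, contains the one genuine gap, and moreover misdiagnoses where the difficulty lies. You assert that near $\eta=0$ ``the large-argument Bessel asymptotics no longer describe $\varphi_{\pm}$'' and that Dunster-type turning-point asymptotics uniform through the cluster are needed. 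This is backwards: the Bessel variable is $z=\frac{t}{2}\left(\eta^{2}-\frac14\right)^{1/2}$, so at $\eta=0$ one has $z=\pm\,\frac{it}{4}$; throughout the turning-point cluster $|\eta|=\mathcal{O}(t^{-1/2})$ the argument $|z|$ is of order $t$ and \emph{large}, and $z\to 0$ only as $\eta\to\pm\tfrac12$. No turning-point machinery from \cite{DTM} is needed for the bound (Dunster's ideas enter only in choosing the model equation \eqref{shrodinger-equation-varphi-1} in the first place). The paper closes the argument with a single clean estimate that your proposal never establishes: $\varphi_{\pm}=\mathcal{O}(t^{-1/2})$ as $t\to+\infty$ \emph{uniformly for all} $\eta$ on the Stokes curves, obtained from the large-$z$ formulas \cite[(10.40.2),(10.40.5)]{OL} (note you need (10.40.5), which keeps the subdominant reflected term of $I_{1}$, since on Stokes curves $e^{z}$ and $e^{-z}$ have comparable modulus) together with the limiting forms at $z=0$ for the portion of the curves near $\eta=-\tfrac12$.

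Once this uniform bound is in hand, the worry you flag about $\tilde g=\mathcal{O}(\eta^{-1})$ reaching size $t^{1/2}$ at the starting point $\eta_{0}=\mathcal{O}(t^{-1/2})$ dissolves by a routine computation rather than a delicate one: the $s$-dependent factor of the Green's kernel is $\mathcal{O}(t^{-1/2})$ uniformly, so the kernel is bounded by $\mathcal{O}(t^{-1/2})\bigl(|\varphi_{+}(\eta)|+|\varphi_{-}(\eta)|\bigr)$, while
\begin{equation*}
\int_{\eta_{0}}^{\eta}|\tilde g(s,t)|\,|ds|=\mathcal{O}\!\left(\int_{t^{-1/2}}^{\delta}\frac{d\rho}{\rho}\right)+\mathcal{O}(1)=\mathcal{O}(\log t),
\end{equation*}
and $t^{-1/2}\log t=o(1)$; a Gronwall/successive-approximation step then yields \eqref{uniform-asymptotic-bessel-1}. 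So your proposal is structurally faithful to the paper but leaves the lemma's entire analytic content --- the uniform smallness of the error through the coalescing region --- as an admitted open estimate, and the tool you propose to supply it is not the one required.
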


\begin{proof}
The proof of this lemma is similar to Lemma \ref{lemma-uniform-approximation-1} or \cite[Theorem 1 or 2]{APC} and it is also based on the parametrix variation method for the non-homogeneous ODEs. Precisely, we only need to show the following approximation
\begin{equation}\label{approximation-Integral-2}
\int_{\eta_{0}}^{\eta}\frac{\varphi_{+}(\eta)\varphi_{-}(s)-\varphi_{-}(\eta)\varphi_{+}(s)}{W(\varphi_{+},\varphi_{-})}\tilde{g}(s,t)\tilde{\phi}(s)ds=o(1)(\varphi_{+}(\eta)+\varphi_{-}(\eta)),~~t\rightarrow+\infty, \end{equation}
where the path of integration is taken along the Stokes curves and $W(\varphi_{+},\varphi_{-})$ is the Wronskian determinant. First, simple calculation yields $W(\varphi_{+},\varphi_{-})\equiv 1$ according to \cite[(10.28.2)]{OL}. In addition, since $\tilde{g}(\eta,t)=\mathcal{O}\left(\frac{1}{\eta^2}\right)$ as $\eta\rightarrow\infty$, then the integral in (\ref{approximation-Integral-2}) is really integrable.
Finally, by analysis we find that $\varphi_{\pm}=\mathcal{O}(1/\sqrt{t})$ as $t\rightarrow+\infty$ uniformly for all $\eta$ on the Stokes curves according to \cite[(10.3.1)(10.3.2)(10.40.2)(10.40.5)]{OL}.
This implies (\ref{approximation-Integral-2}) accordingly.
\end{proof}

\begin{remark}
By careful analysis, we find that (\ref{uniform-asymptotic-bessel-1}) is not true for $\eta\rightarrow \frac{1}{2}$. It implies that $\phi$ cannot be asymptotically approximated by the modified Bessel functions in the whole sector region $\arg\eta\in[\frac{-\pi}{2},\frac{\pi}{2}]$. This may be the reason why we cannot use Lemma \ref{lemma-uniform-approximate-1} to derive $s_{1}$, but should make another transformation (\ref{transform-Y-hat(Y)-1}).
\end{remark}

According to Lemma \ref{lemma-uniform-approximate-1}, $Y_{2}$, the second line of $\tilde{Y}$, i.e. $(Y_{21}, Y_{22})$, can be approximated by a linear combination of $\varphi_{+}$ and $\varphi_{-}$, and it is uniformly valid for all $\eta$ on the two adjacent Stokes curves which extend to $\infty$ with $\arg\eta=\frac{\pi}{2}$ and $\arg\eta=\frac{3\pi}{2}$ respectively. Then $s_{2}$ can be evaluated through $\tilde{Y}^{(3)}=\tilde{Y}^{(2)}S_{2}$.
Here and hereafter, we assume $\eta\gg t$ to ensure that $\frac{t}{2}\sqrt{\eta^{2}-\frac{1}{4}}\sim \frac{t\lambda}{2}-\frac{t}{4}$.


For $\eta\rightarrow\infty$ with $\arg\eta\sim \frac{\pi}{2}$, $\arg\left(\sqrt{\eta^{2}-\frac{1}{4}}\right)\sim\frac{\pi}{2}$. Using the uniform asymptotic behaviors of $K_{1}(z)$ and $I_{1}(z)$(see \cite[(10.40.2), (10.40.5)]{OL})
\begin{equation}\label{uniform-behavior-K-I-pi/2}
\begin{cases}
K_{1}(z)\sim (\frac{\pi}{2})^{\frac{1}{2}}z^{-\frac{1}{2}}e^{-z},&\arg z\in(-\frac{3\pi}{2},\frac{3\pi}{2});\\[0.2cm]
I_{1}(z)\sim (\frac{1}{2\pi})^{\frac{1}{2}}z^{-\frac{1}{2}}e^{z}-i(\frac{1}{2\pi})^{\frac{1}{2}}z^{-\frac{1}{2}}e^{-z},&\arg z\in(-\frac{\pi}{2},\frac{3\pi}{2}).
\end{cases}
\end{equation}
we get
\begin{equation}
\begin{cases}
\varphi_{+}\sim \pi^{\frac{1}{2}}t^{-\frac{1}{2}}e^{\frac{t}{4}}e^{-\frac{t\lambda}{2}},\\[0.2cm]
\varphi_{-}\sim \pi^{-\frac{1}{2}}t^{-\frac{1}{2}}e^{-\frac{t}{4}}e^{\frac{t\lambda}{2}}-i\pi^{-\frac{1}{2}}t^{-\frac{1}{2}}e^{\frac{t}{4}}e^{-\frac{t\lambda}{2}}.
\end{cases}
\end{equation}
Hence

\begin{equation}\label{uniform-asymptotic-behavior-1}
\begin{cases}
(\tilde{C})^{\frac{1}{2}}\varphi_{+}\sim i\pi^{\frac{1}{2}}e^{\frac{t}{4}}e^{-\frac{t\lambda}{2}}=d_{1}e^{-\frac{t\lambda}{2}},\\[0.2cm]
(\tilde{C})^{\frac{1}{2}}\varphi_{-}\sim i\pi^{-\frac{1}{2}}e^{-\frac{t}{4}}e^{\frac{t\lambda}{2}}+\pi^{-\frac{1}{2}}e^{\frac{t}{4}}e^{-\frac{t\lambda}{2}}=d_{2}e^{\frac{t\lambda}{2}}+d_{3}e^{-\frac{t\lambda}{2}}.
\end{cases}
\end{equation}
In virtue of the asymptotic behavior of the canonical solution $Y(\lambda)$ in \eqref{1.8} and the transformation \eqref{transform-Y-tilde(Y)-1}, we get
$$\tilde{Y}_{21}(\lambda)\sim-u^{-\frac{1}{2}}e^{\frac{t\lambda}{2}},~~\tilde{Y}_{22}(\lambda)\sim u^{\frac{1}{2}}e^{-\frac{t\lambda}{2}}\quad \text{ as }\lambda\rightarrow\infty, \lambda\in\Omega^{(2)}.$$
Comparing with \eqref{uniform-asymptotic-behavior-1}, one can easily obtain
\begin{equation}\label{uniform-tilde-Y-1}
(\tilde{Y}_{21},\tilde{Y}_{22})\sim (\tilde{C})^{\frac{1}{2}}(\varphi_{+},\varphi_{-})\left(\begin{matrix}\frac{d_{3}u^{-\frac{1}{2}}}{d_{1}d_{2}}&\frac{u^{\frac{1}{2}}}{d_{1}}\\[0.2cm] -\frac{u^{-\frac{1}{2}}}{d_{2}}&0\end{matrix}\right) \text{ as } \lambda\rightarrow\infty \text{ in } \Omega^{(2)}.
\end{equation}

Now for the case $\arg\eta=\arg(\sqrt{\eta^{2}-\frac{1}{4}})\sim\frac{3\pi}{2}$. Here we need the following analytic continuation formula for the modified Bessel functions $K_{1}(z)$ and $I_{1}(z)$ (see \cite[(10.34.5), (10.34.6)]{OL})
\begin{equation}\label{analytic-continuation-for-K-I}
\begin{cases}
K_{1}(z)=-K_{1}(ze^{-2\pi i})-2K_{1}(ze^{-\pi});\\[0.2cm]
I_{1}(z)=\frac{1}{\pi i}\left(K_{1}(ze^{-\pi i})+K_{1}(z)\right).
\end{cases}
\end{equation}
Substituting \eqref{uniform-behavior-K-I-pi/2} into \eqref{analytic-continuation-for-K-I}, we can obtain the uniform asymptotic behavior of $K_{1}(z)$ and $I_{1}(z)$ for $\arg z\sim \frac{3\pi}{2}$
\begin{equation}\label{uniform-behavior-K-I-3pi/2}
\begin{cases}
K_{1}(z)\sim (\frac{\pi}{2})^{\frac{1}{2}}z^{-\frac{1}{2}}e^{-z}-2i(\frac{\pi}{2})^{\frac{1}{2}}z^{-\frac{1}{2}}e^{z},&\arg z\in(-\frac{3\pi}{2},\frac{3\pi}{2});\\[0.2cm]
I_{1}(z)\sim -(\frac{1}{2\pi})^{\frac{1}{2}}z^{-\frac{1}{2}}e^{z}-i(\frac{1}{2\pi})^{\frac{1}{2}}z^{-\frac{1}{2}}e^{-z},&\arg z\in(\frac{\pi}{2},\frac{5\pi}{2}).
\end{cases}
\end{equation}
By suitable modification to the deriving of \eqref{uniform-tilde-Y-1}, we obtain that
\begin{equation}\label{3.15}
(\tilde{Y}_{21},\tilde{Y}_{22})\sim (\tilde{C})^{\frac{1}{2}}(\varphi_{+},\varphi_{-})\left(\begin{matrix}\frac{-e_{4}u^{-\frac{1}{2}}}{M}&\frac{e_{3}u^{\frac{1}{2}}}{M}\\[0.2cm]
\frac{e_{1}u^{-\frac{1}{2}}}{M}&\frac{e_{2}u^{\frac{1}{2}}}{M}\end{matrix}\right) \text{ as } \lambda\rightarrow\infty \text{ in } \Omega^{(3)},
\end{equation}
where $e_{1}=d_{1}, e_{2}=-2\pi id_{2}, e_{3}=d_{2}, e_{4}=d_{3}=\frac{-i}{\pi}d_{1}$ and $M=e_{1}e_{3}+e_{2}e_{4}=-d_{1}d_{2}$.

By using of the definition of $S_2$ in \eqref{1.9}, it follows from \eqref{uniform-tilde-Y-1} and \eqref{3.15} that
\begin{equation*}
S_{2}\sim\left(\begin{matrix}\frac{d_{3}u^{-\frac{1}{2}}}{d_{1}d_{2}}&\frac{u^{\frac{1}{2}}}{d_{1}}\\[0.2cm] -\frac{u^{-\frac{1}{2}}}{d_{2}}&0\end{matrix}\right)^{-1}\left(\begin{matrix}\frac{-e_{4}u^{-\frac{1}{2}}}{M}&\frac{e_{3}u^{\frac{1}{2}}}{M}\\[0.2cm]
\frac{e_{1}u^{-\frac{1}{2}}}{M}&\frac{e_{2}u^{\frac{1}{2}}}{M}\end{matrix}\right)=\left(\begin{matrix}1&\frac{-2\pi id_{2}u}{d_{1}}\\[0.2cm]
0&1\end{matrix}\right),
\end{equation*}
which gives
\begin{equation}\label{t-infty-s2}
s_{2}=-2i\hat{u}.
\end{equation}
Here, use has been made of the fact that $s_{2}$ is independent of $t$.

Using the similar method as in the computation of $s_{2}$, we can carry out $s_{1}$.
If we replace the transformation \eqref{transform-Y-tilde(Y)-1} by
%
\begin{equation}{\label{transform-Y-hat(Y)-1}}
\hat{Y}(\lambda)=\left(\begin{matrix}1&0\\1&1\end{matrix}\right)(-uy)^{-\frac{\sigma_{3}}{2}}Y(\lambda),
\end{equation}
then the Shr\"{o}dinger equation \eqref{shrodinger-tilde-Y-2} becomes
\begin{equation}\label{shrodinger-equation-hat-Y-2}
\begin{split}
\frac{d^{2}\hat{\phi}}{d\eta^2}=\left[\frac{t^{2}\eta^{2}}{4(\eta^{2}-\frac{1}{4})}+\frac{3}{4\eta^{2}}+\hat{g}(\eta,t)\right]\hat{\phi}=\hat{F}(\eta,t)\hat{\phi}.
\end{split}
\end{equation}
where $\hat{g}(\eta,t)$ is bounded  for $\eta$ away from both $0$ and $-\frac{1}{2}$, and
\begin{equation*}
\begin{split}
&\hat{g}(\eta,t)=\mathcal{O}\left(\frac{1}{\eta}\right)  \text{ as }\eta\rightarrow 0;~~~~\hat{g}(\eta,t)=\mathcal{O}\left(\frac{1}{(\eta+\frac{1}{2})^{2}}\right) \text{ as }\eta\rightarrow -\frac{1}{2};\\
&\hat{g}(\eta,t)=\mathcal{O}\left(\frac{1}{\eta^{2}}\right) \text{ as }\eta\rightarrow \infty.
\end{split}
\end{equation*}
Hence we obtain a similar result to Lemma \ref{lemma-uniform-approximate-1} as follows:
\begin{lemma}\label{lemma-uniform-approximate-2}
There exists two constants $C_{1}$ and $C_{2}$ such that
\begin{equation}\label{uniform-approximation-2}
\hat{\phi}=[C_{1}+o(1)]\varphi_{+}(\eta)+[C_{2}+o(1)]\varphi_{-}(\eta)
\end{equation}
uniformly for $\eta$ on Stokes curves with $\eta\in \mathbb{C}\setminus (-\infty,-\frac{1}{2}]$ as $t\rightarrow\infty$, where
$\varphi_{\pm}$ are two linearly independent solutions of \eqref{shrodinger-equation-varphi-1} defined in Lemma \ref{lemma-uniform-approximate-1}.
\end{lemma}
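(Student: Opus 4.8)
The plan is to follow verbatim the parametrix-variation argument used for Lemma \ref{lemma-uniform-approximate-1}, because \eqref{shrodinger-equation-hat-Y-2} has exactly the same leading coefficient as the model \eqref{shrodinger-equation-varphi-1} and differs from \eqref{shrodinger-tilde-Y-2} only in that its second, double-pole singularity has migrated from $\eta=\frac{1}{2}$ to $\eta=-\frac{1}{2}$. First I would split $\hat{F}(\eta,t)=F_{0}(\eta,t)+\hat{g}(\eta,t)$, where $F_{0}(\eta,t)=\frac{t^{2}\eta^{2}}{4(\eta^{2}-\frac{1}{4})}+\frac{3}{4\eta^{2}}$ is the coefficient in \eqref{shrodinger-equation-varphi-1}, and regard $\hat{g}\,\hat{\phi}$ as an inhomogeneous forcing term. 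Since $\varphi_{\pm}$ are linearly independent solutions of $\varphi''=F_{0}\varphi$, variation of parameters yields
\begin{equation*}
\hat{\phi}(\eta)=C_{1}\varphi_{+}(\eta)+C_{2}\varphi_{-}(\eta)+\int_{\eta_{0}}^{\eta}\frac{\varphi_{+}(\eta)\varphi_{-}(s)-\varphi_{-}(\eta)\varphi_{+}(s)}{W(\varphi_{+},\varphi_{-})}\hat{g}(s,t)\hat{\phi}(s)\,ds,
\end{equation*}
with $\eta_{0}$ one of the turning points that coalesce at $\eta=0$, so that establishing \eqref{uniform-approximation-2} reduces to proving the exact analogue of \eqref{approximation-Integral-2}, namely that this integral is $o(1)(\varphi_{+}(\eta)+\varphi_{-}(\eta))$ as $t\rightarrow+\infty$, uniformly for $\eta$ on the two adjacent Stokes curves.

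For the estimate itself I would reuse the three ingredients from the proof of Lemma \ref{lemma-uniform-approximate-1}: the Wronskian is constant, $W(\varphi_{+},\varphi_{-})\equiv 1$, by \cite[(10.28.2)]{OL}; the tail of the integral is convergent because $\hat{g}(\eta,t)=\mathcal{O}(1/\eta^{2})$ as $\eta\rightarrow\infty$; and the uniform bound $\varphi_{\pm}=\mathcal{O}(1/\sqrt{t})$ holds along the Stokes curves, which follows from the uniform asymptotics of $K_{1}$ and $I_{1}$ in \cite[(10.40.2),(10.40.5)]{OL}. The contributions from the neighbourhood of the endpoint $\eta_{0}$ near $\eta=0$, where $\hat{g}=\mathcal{O}(1/\eta)$ is milder than the $3/(4\eta^{2})$ already carried by $F_{0}$, are controlled by $\varphi_{\pm}$ exactly as in Lemma \ref{lemma-uniform-approximate-1}. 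Together these give $o(1)(\varphi_{+}(\eta)+\varphi_{-}(\eta))$, which is \eqref{uniform-approximation-2}.

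The one genuinely new point, and the step I expect to be the main obstacle, is the location of the non-integrable singularity of $\hat{g}$: near $\eta=-\frac{1}{2}$ one has $\hat{g}=\mathcal{O}\bigl(1/(\eta+\frac{1}{2})^{2}\bigr)$, which is not integrable, so the variation-of-parameters integral would diverge if the contour met this point. This is exactly why the statement is confined to $\eta\in\mathbb{C}\setminus(-\infty,-\frac{1}{2}]$, whose slit is the mirror image across $\eta=0$ of the cut $[\frac{1}{2},+\infty)$ excluded in Lemma \ref{lemma-uniform-approximate-1}. I would therefore verify that the relevant pair of Stokes curves emanating from $\eta_{0}$ and running to infinity (with $\arg\eta\rightarrow\pm\frac{\pi}{2}$, the directions obtained from those in the $s_{2}$ computation by the $\pi$-shift relating the sectors $\Omega^{(1)},\Omega^{(2)}$ to $\Omega^{(2)},\Omega^{(3)}$) lie entirely inside this slit plane and hence stay bounded away from $\eta=-\frac{1}{2}$. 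Once this geometric check is in place, $\hat{g}$ remains integrable along the contour and the preceding estimates close the argument.
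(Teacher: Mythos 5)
Your proposal is correct and follows essentially the same route as the paper: the paper offers no separate proof of this lemma, presenting it as a ``similar result'' obtained by repeating verbatim the parametrix-variation argument of Lemma \ref{lemma-uniform-approximate-1} with the double pole relocated from $\eta=\frac{1}{2}$ to $\eta=-\frac{1}{2}$, and you reproduce exactly that argument with the same three ingredients (unit Wronskian via \cite[(10.28.2)]{OL}, integrability from $\hat{g}=\mathcal{O}(1/\eta^{2})$ at infinity, and the uniform bound $\varphi_{\pm}=\mathcal{O}(1/\sqrt{t})$ on the Stokes curves). Your explicit geometric check that the relevant Stokes curves stay inside $\mathbb{C}\setminus(-\infty,-\frac{1}{2}]$, away from the non-integrable singularity of $\hat{g}$, is a point the paper leaves implicit in the restriction stated in the lemma.
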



With the help of the Lemma \ref{lemma-uniform-approximate-2}, we need only repeat the process of the derivation of $s_{2}$ above.
According to \cite[(10.40.2), (10.40.5)]{OL}, the uniform asymptotic behaviors of $K_{1}(z)$ and $I_{1}(z)$ are
\begin{equation}\label{uniform-behavior-K-I--pi/2}
\begin{cases}
K_{1}(z)\sim (\frac{\pi}{2})^{\frac{1}{2}}z^{-\frac{1}{2}}e^{-z},&\arg z\in(-\frac{3\pi}{2},\frac{3\pi}{2});\\[0.2cm]
I_{1}(z)\sim (\frac{1}{2\pi})^{\frac{1}{2}}z^{-\frac{1}{2}}e^{z}+i(\frac{1}{2\pi})^{\frac{1}{2}}z^{-\frac{1}{2}}e^{-z},&\arg z\in(-\frac{3\pi}{2},\frac{\pi}{2}),\\[0.2cm]
I_{1}(z)\sim (\frac{1}{2\pi})^{\frac{1}{2}}z^{-\frac{1}{2}}e^{z}-i(\frac{1}{2\pi})^{\frac{1}{2}}z^{-\frac{1}{2}}e^{-z},&\arg z\in(-\frac{\pi}{2},\frac{3\pi}{2}).
\end{cases}
\end{equation}
Hence we can easily get
\begin{equation}\label{uniform-asymptotic-behavior-3}
\begin{cases}
(\hat{C})^{\frac{1}{2}}\varphi_{+}\sim \pi^{\frac{1}{2}}e^{\frac{t}{4}}e^{-\frac{t\lambda}{2}}=\frac{{d}_{1}}{i}e^{-\frac{t\lambda}{2}},\arg z\in(-\frac{3\pi}{2},\frac{3\pi}{2});\\[0.2cm]
(\hat{C})^{\frac{1}{2}}\varphi_{-}\sim \pi^{-\frac{1}{2}}e^{-\frac{t}{4}}e^{\frac{t\lambda}{2}}+i\pi^{-\frac{1}{2}}e^{\frac{t}{4}}e^{-\frac{t\lambda}{2}}=\frac{{d}_{2}}{i}e^{\frac{t\lambda}{2}}-\frac{{d}_{3}}{i}e^{-\frac{t\lambda}{2}},\arg z\in(-\frac{3\pi}{2},\frac{\pi}{2});\\[0.2cm]
(\hat{C})^{\frac{1}{2}}\varphi_{-}\sim \pi^{-\frac{1}{2}}e^{-\frac{t}{4}}e^{\frac{t\lambda}{2}}-i\pi^{-\frac{1}{2}}e^{\frac{t}{4}}e^{-\frac{t\lambda}{2}}=\frac{{d}_{2}}{i}e^{\frac{t\lambda}{2}}+\frac{{d}_{3}}{i}e^{-\frac{t\lambda}{2}},\arg z\in(-\frac{\pi}{2},\frac{3\pi}{2});
\end{cases}
\end{equation}
where $d_{j},j=1,2,3$ are defined in (\ref{uniform-asymptotic-behavior-1}) and $\hat{C}=t+\frac{2v-vy-v/y}{\lambda}$. Noting that the large-$\lambda$ asymptotic behavior of $\hat{Y}(\lambda)$ has also changed to be
\begin{equation}\label{uniform-hat-Y--pi/2}
\hat{Y}_{21}(\lambda)\sim(-uy)^{-\frac{1}{2}}e^{\frac{t\lambda}{2}},~~\hat{Y}_{22}(\lambda)\sim (-uy)^{\frac{1}{2}}e^{-\frac{t\lambda}{2}},~~\lambda\rightarrow\infty, \lambda\in\Omega^{(k)},
\end{equation}
we get
\begin{equation}{\label{Y-t-infty--pi/2}}
(\hat{Y}_{21},\hat{Y}_{22})\sim(\hat{C})^{\frac{1}{2}}(\varphi_{+},\varphi_{-})\left(\begin{matrix}\frac{id_{3}(-uy)^{-\frac{1}{2}}}{d_{1}d_{2}}&\frac{i(-uy)^{\frac{1}{2}}}{d_{1}}\\[0.2cm]
\frac{i(-uy)^{-\frac{1}{2}}}{d_{2}}&0\end{matrix}\right),~~\lambda\rightarrow\infty, \lambda\in \Omega^{(1)}.
\end{equation}
and
\begin{equation}{\label{Y-t-infty-pi/2}}
(\hat{Y}_{21},\hat{Y}_{22})\sim(\hat{C})^{\frac{1}{2}}(\varphi_{+},\varphi_{-})\left(\begin{matrix}-\frac{id_{3}(-uy)^{-\frac{1}{2}}}{d_{1}d_{2}}&\frac{i(-uy)^{\frac{1}{2}}}{d_{1}}\\[0.2cm]
\frac{i(-uy)^{-\frac{1}{2}}}{d_{2}}&0\end{matrix}\right),~~\lambda\rightarrow\infty, \lambda\in \Omega^{(2)}.
\end{equation}
Noting that in (\ref{Y-t-infty--pi/2}), $(Y_{21},Y_{22})$ represent the second line of $Y^{(1)}(\lambda)$, while in (\ref{Y-t-infty-pi/2}), $(Y_{21},Y_{22})$ is the second line of $Y^{(2)}(\lambda)$, according to the definition of $S_1$ in (\ref{1.9}), it follows that

\begin{equation*}
S_{1}\sim\left(\begin{matrix}\frac{id_{3}(-uy)^{-\frac{1}{2}}}{d_{1}d_{2}}&\frac{i(-uy)^{\frac{1}{2}}}{d_{1}}\\[0.2cm]
\frac{i(-uy)^{-\frac{1}{2}}}{d_{2}}&0\end{matrix}\right)^{-1}\left(\begin{matrix}-\frac{id_{3}(-uy)^{-\frac{1}{2}}}{d_{1}d_{2}}&\frac{i(-uy)^{\frac{1}{2}}}{d_{1}}\\[0.2cm]
\frac{i(-uy)^{-\frac{1}{2}}}{d_{2}}&0\end{matrix}\right)=\left(\begin{matrix}1&0\\[0.2cm] \frac{2d_{3}}{d_{2}uy}&1\end{matrix}\right),
\end{equation*}
which gives
\begin{equation}\label{t-infty-s1}
s_{1}=\frac{2i}{\hat{u}}.
\end{equation}

Finally, combining \eqref{t-infty-s1} with \eqref{t-infty-s2} and \eqref{t-0-s1} with \eqref{t-0-s2}, we can immediately have
$s_{1}s_{2}=4=-4\sin^{2}{\frac{\pi\sigma}{2}}$, which gives \eqref{connection formula} accordingly.
 \medskip

\section*{Acknowledgements}
The authors are grateful to Prof. Yu-Qiu Zhao for valuable discussions and suggestions.
This work was supported in part by the National Natural Science Foundation of China under grant number 11201070, and the GuangDong Natural Science Foundation under grant number Yq2013161 and 2014A030313176.

\end{document}